\newtheorem{theorem}{Theorem}[section]
\newtheorem{corollary}{Corollary}[section]
\newtheorem{proposition}{Proposition}[section]
\newcommand{\levy}{L\'{e}vy}
\newcommand{\cadlag}{ c\`agl\`ad}
\newcommand{\figcaption}{\def\@captype{figure}\caption}
\newcommand{\tabcaption}{\def\@captype{table}\caption}
\font\tenmsbm=msbm10\textfont
\font\sevenmsbm=msbm7
\def\PP{\mathbb P}
\def\RR{\mathbb R}
\def\cN{{\cal N}}
\def\<{\left<}\def\>{\right>}
\def\({\left(}\def\){\right)}
\begin{document}
\title{Pathwise uniqueness for stochastic differential equations driven by pure jump processes \footnote{This research is supported by Macao Science and Technology Fund FDCT
025/2016/A1.}
}
\author{Jiayu Zheng and Jie Xiong \\ }
\date{}
\maketitle
\begin{abstract}
Based on the weak existence and weak uniqueness, we study the pathwise uniqueness of the solutions for a class of one-dimensional stochastic differential equations driven by pure jump processes. By using Tanaka's formula and the local time technique, we show that there is no gap between the strong uniqueness and weak uniqueness when the coefficients of the Poisson random measures satisfy a suitable condition.\bigskip
\end{abstract}

\noindent{\bf Keywords:} Pure jump process, weak uniqueness, Tanaka's formula, pathwise uniqueness, local time.

\noindent{\bf MSC:} Primary 60H10; secondary 60H30.

\section{Introduction}
In the study of stochastic equations, it is common to distinguish between strong uniqueness and weak uniqueness of solutions. Roughly speaking, strong uniquenss asserts that two solutions on the same probability space with the same stochastic inputs agree almost surely while weak uniqueness asserts that two solutions agree in distribution. Yamada and Watanabe \cite{YW} proved that weak existence and strong uniqueness imply strong existence and weak uniqueness. Engelbert \cite{ES} extended this result to a somewhat more general class of equations and gave a converse in which the roles of existence and uniqueness are reversed, that is, weak uniqueness and strong existence imply strong uniqueness. A general version of the Yamada-Watanabe and Engelbert results which apply to a wide variety of stochastic equations was given by Kurtz \cite{K}.

We study the solutions of the stochastic differential equations (SDEs) driven by pure jump processes. Suppose that $U_0$ and $U_1$ are complete separable metric spaces,  and that $\mu_0(du)$ and $\mu_1(du)$ are $\sigma$-finite Borel measures on $U_0$ and $U_1$, respectively. The coefficients $g_0(x,u)$ and $g_1(x,u)$ are Borel functions on $\mathbb{R} \times U_0$ and $\mathbb{R} \times U_1$, respectively, which have at most countably many discontinuous points, and satisfy the following condition:

 (1.a) For any fixed $u$, $g_i(x,u)+x$, $i=0,\ 1$, are non-decreasing in $x\in\RR$.

Let $\{p_0(t)\}$ and $\{p_1(t)\}$ be $(\mathscr{F}_t)$-Poisson point processes on $U_0$ and $U_1$ with characteristic measures $\mu_0(du)$ and $\mu_1(du)$, respectively. Suppose that $\{p_0(t)\}$ and $\{p_1(t)\}$ are independent of each other.

Let $N_0(ds, du)$ and $N_1(ds, du)$ be the Poisson random measures associated with $\{p_0(t)\}$ and $\{p_1(t)\}$, respectively. Let $\widetilde{N}_0(ds, du)$ be the compensated measure of $N_0(ds, du)$. By a solution of the SDE
\begin{align}\label{eq1}
X_t = X_0 + \int_0^t b(X_{s})ds + \int_0^t \int_{U_0} g_0(X_{s-}, u) \widetilde{N}_0(ds,du) + \int_0^t \int_{U_1} g_1(X_{s-}, u) N_1(ds, du),
\end{align}
we mean a {\cadlag} and $(\mathscr{F}_t)$-adapted process $\{X_t\}$ that satisfies the equation almost surely for every $t\ge 0$.

In this work we consider the pathwise uniqueness for the solution of (\ref{eq1}). We assume that the weak existence and the weak uniqueness have already been known. Actually, this work is motivated by an unsolved problem in Etheridge and Kurtz \cite{EK} where a system of stochastic equations arisen from a population genealogic model. Weak existence of solutions of the system follows by approximation, while weak uniqueness holds for every single euqation in the system. They point out that such weak uniqueness for a single equation does not imply weak uniqueness for the system while strong uniqueness for each single equation would give strong uniqueness for the system. They proved the uniqueness of the system of SDEs when there are only finite many jumps in any finite time interval. However, the uniqueness problem for the system of SDEs under the weaker conditions which allows infinitely many very small jumps in a finite time interval was left open.  Since the weak uniqueness holds for every equation, we thus proceed to considering its strong uniqueness. Inspired by the work of Zheng $et$ $al$ \cite{ZXZ}, we prove the strong uniqueness for the case of $d=1$, where $d$ is the spatial dimension in their model.

In the book \cite{DM} of Revuz and Yor, it is proved that the weak uniqueness together with the local time condition, that is, the local time at 0 of the difference of two solutions vanishes, then the pathwise uniqueness holds. They considered the equations driven by continuous semimartingales. Zheng $et$ $al$ \cite{ZXZ} extended this result to a more general class of SDEs with jumps. From the above statement, it is obvious that there is a gap is between the weak uniqueness and strong uniqueness for general stochastic equation. What may seem surprising in the results we are about to prove is that we get strong uniqueness directly when the weak uniqueness holds  for SDEs driven by pure jump processes. One can also observe that in this case the maximum of any two solutions of the equation remains a solution, which is usually not true for SDEs driven by Brownian motion due to the appearance of a local time.

This paper is arranged as follows: In Section 2 we prove the pathwise uniqueness by using Tanaka's formula and the local time technique. Then an example from \cite{EK}  which motives our research is discussed in Section 3.

\section{Pathwise uniqueness}
In this section, we discuss the pathwise uniqueness of the equation (\ref{eq1}) based on its weak existence and weak uniqueness.

We first state Tanaka's formula and an important Corollary whose proofs can be found on page 219 of Protter \cite{PE}. These two results are then used to prove that the maximum of two solutions of (\ref{eq1}) is also a solution. Finally, combining with the weak uniqueness, we obtain the strong uniqueness of the solution to (\ref{eq1}).

{\em Tanaka's Formula}: Let $X$ be a semimartingale and let $L^a$ be its local time at $a\in\RR$. Then,
\begin{align*}
(X_t - a)^{+} - (X_0 - a)^{+} =& \int_{0+}^t \mathbf{1}_{\{X_{s-} >a\}} dX_s + \sum_{0<s\le t}\mathbf{1}_{\{X_{s-} >a\}} (X_s - a)^{-}  \nonumber \\
&+\sum_{0<s\le t}\mathbf{1}_{\{X_{s-} \le a\}} (X_s - a)^{+} + \frac{1}{2} L_t^a.
\end{align*}

\begin{corollary} \label{coro1}
Let $X$ be a semimartingale with local time $(L^a)_{a \in \mathbb{R}}$. Let $g$ be a bounded Borel measurable function. Then,
\[
\int_{-\infty}^{\infty} L_t^a g(a)da = \int_0^t g(X_s) d[X,X]_s^c,  \quad a.s.
\]
\end{corollary}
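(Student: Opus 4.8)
The plan is to reduce the identity to continuous, compactly supported test functions and then verify it there by matching the given Tanaka formula against the classical It\^o change-of-variables formula. First I would fix $g\in C_c(\mathbb{R})$ and introduce the primitive $f(x)=\int_{\mathbb{R}}(x-a)^{+}g(a)\,da$; then $f\in C^{2}(\mathbb{R})$ with $f'(x)=\int_{-\infty}^{x}g(a)\,da$ and $f''=g$. The idea is to multiply Tanaka's formula, written at the level $a$, by $g(a)$ and integrate in $a$, thereby converting the local-time term into $\int_{\mathbb{R}}L_t^a g(a)\,da$ and recognizing every other term as a piece of It\^o's formula for $f(X_t)$.

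Concretely, the boundary terms integrate to $\int_{\mathbb{R}}g(a)\big[(X_t-a)^{+}-(X_0-a)^{+}\big]\,da=f(X_t)-f(X_0)$. For the stochastic-integral term I would apply a stochastic Fubini theorem together with $\int_{\mathbb{R}}g(a)\mathbf{1}_{\{X_{s-}>a\}}\,da=f'(X_{s-})$ to obtain $\int_{\mathbb{R}}g(a)\big(\int_0^t \mathbf{1}_{\{X_{s-}>a\}}\,dX_s\big)\,da=\int_0^t f'(X_{s-})\,dX_s$. For the two jump sums I would use the pointwise identity $\int_{\mathbb{R}}g(a)\big[\mathbf{1}_{\{x>a\}}(y-a)^{-}+\mathbf{1}_{\{x\le a\}}(y-a)^{+}\big]\,da=f(y)-f(x)-f'(x)(y-x)$, valid for all $x,y$ and checked by splitting into the cases $x>a$ and $x\le a$; with $x=X_{s-}$ and $y=X_s$ the two sums collapse to $\sum_{0<s\le t}\big(f(X_s)-f(X_{s-})-f'(X_{s-})\Delta X_s\big)$. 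Collecting the three groups of terms, the integrated Tanaka formula becomes $\tfrac12\int_{\mathbb{R}}L_t^a g(a)\,da=f(X_t)-f(X_0)-\int_0^t f'(X_{s-})\,dX_s-\sum_{0<s\le t}\big(f(X_s)-f(X_{s-})-f'(X_{s-})\Delta X_s\big)$.

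Since It\^o's change-of-variables formula for the $C^{2}$ function $f$ expresses $f(X_t)-f(X_0)$ as precisely those same three terms plus $\tfrac12\int_0^t f''(X_{s-})\,d[X,X]^c_s$, subtracting leaves $\tfrac12\int_{\mathbb{R}}L_t^a g(a)\,da=\tfrac12\int_0^t g(X_{s-})\,d[X,X]^c_s$; and because $X_{s-}=X_s$ off the countable set of jump times, which carries no mass for the continuous measure $d[X,X]^c$, I may replace $X_{s-}$ by $X_s$, proving the claim for $g\in C_c(\mathbb{R})$. To pass to arbitrary bounded Borel $g$, I would view both sides as integrals of $g$ against the measures $\nu_1(da)=L_t^a\,da$ and $\nu_2(A)=\int_0^t \mathbf{1}_A(X_s)\,d[X,X]^c_s$ on $\mathbb{R}$; taking $g_n\uparrow 1$ in $C_c$ shows $\nu_1(\mathbb{R})=\nu_2(\mathbb{R})=[X,X]^c_t<\infty$, so $\nu_1$ and $\nu_2$ are finite measures agreeing on $C_c(\mathbb{R})$, hence equal, which gives the identity for every bounded Borel $g$ by the monotone class theorem.

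The step I expect to be the main obstacle is the rigorous justification of the stochastic Fubini interchange, which requires controlling the joint measurability and integrability of $(s,a)\mapsto \mathbf{1}_{\{X_{s-}>a\}}$ so that the order of the $dX_s$ and $g(a)\,da$ integrations may be swapped almost surely; a secondary technical point is selecting a version of $(t,a)\mapsto L_t^a$ that is jointly measurable and well behaved in $a$, so that $\int_{\mathbb{R}}L_t^a g(a)\,da$ is meaningful pathwise and the monotone-class extension is legitimate.
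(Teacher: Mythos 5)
Your proposal is correct and follows essentially the same route as the proof the paper invokes (it cites Protter, p.~219, rather than proving the corollary itself): for $g\in C_c(\mathbb{R})$ one integrates Tanaka's formula against $g(a)\,da$ with the primitive $f(x)=\int (x-a)^{+}g(a)\,da$ satisfying $f''=g$, compares with It\^o's formula to identify $\tfrac12\int L_t^a g(a)\,da$ with $\tfrac12\int_0^t g(X_s)\,d[X,X]^c_s$, and extends to bounded Borel $g$ by a monotone class argument. The technical points you flag (stochastic Fubini for the bounded, compactly supported integrand, absolute convergence of the jump sums via the bound $\tfrac12\|g\|_\infty(\Delta X_s)^2$, a jointly measurable version of $(t,a)\mapsto L_t^a$, and a common null set for a countable family of test functions) are exactly the standard ones and are all routinely handled, so no gap remains.
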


The following result is the key of this article.
\begin{proposition} \label{prop1}
If $X^1$ and $X^2$ are two solutions of (\ref{eq1}) such that $X_0^1 = X_0^2$ a.s., then  $X^1 \vee X^2$ is also a solution of (\ref{eq1}).
\end{proposition}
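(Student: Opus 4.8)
The plan is to write the maximum as $X^1 \vee X^2 = X^2 + (X^1 - X^2)^+$ and to extract the dynamics of the positive part via Tanaka's formula. Set $Z = X^1 - X^2$, which is a semimartingale with $Z_0 = X^1_0 - X^2_0 = 0$ a.s., and apply Tanaka's formula at the level $a = 0$. Since $(Z_0)^+ = 0$, this represents $(Z_t)^+$ as the stochastic integral $\int_{0+}^t \mathbf{1}_{\{Z_{s-}>0\}}\, dZ_s$ plus the two jump sums and the half-local-time term $\tfrac12 L_t^0$. The strategy is then to show that both jump sums and the local-time term vanish, so that $(Z_t)^+ = \int_{0+}^t \mathbf{1}_{\{Z_{s-}>0\}}\, dZ_s$, and finally to recognize the right-hand side as exactly the increment $Y_t - X^2_t$ required for $Y := X^1 \vee X^2$ to solve (\ref{eq1}).

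First I would dispose of the local time. Because (\ref{eq1}) is driven purely by the drift and the Poisson integrals, each solution has no continuous local-martingale part, so the continuous part of the quadratic variation $[Z,Z]^c$ vanishes. Corollary \ref{coro1} then gives $\int_{\RR} L_t^a\, g(a)\, da = \int_0^t g(Z_s)\, d[Z,Z]^c_s = 0$ for every bounded Borel $g$, whence $L_t^a = 0$ for Lebesgue-almost every $a$; using the right-continuity of $a \mapsto L_t^a$ one concludes $L_t^0 = 0$.

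The heart of the argument is that condition (1.a) forces the sign of $Z$ to be preserved across every jump, which annihilates both jump sums. At a jump time $s$ driven by $N_i$ with mark $u$, the post-jump values are $X^j_s = X^j_{s-} + g_i(X^j_{s-}, u) = h(X^j_{s-})$ with $h(x) := x + g_i(x,u)$ non-decreasing by (1.a). Hence $Z_{s-} > 0$ implies $X^1_{s-} > X^2_{s-}$, so $X^1_s \ge X^2_s$ and $(Z_s)^- = 0$; similarly $Z_{s-} \le 0$ implies $(Z_s)^+ = 0$. Therefore $\sum_{0<s\le t}\mathbf{1}_{\{Z_{s-}>0\}}(Z_s)^- = 0$ and $\sum_{0<s\le t}\mathbf{1}_{\{Z_{s-}\le 0\}}(Z_s)^+ = 0$, leaving $(Z_t)^+ = \int_{0+}^t \mathbf{1}_{\{Z_{s-}>0\}}\, dZ_s$. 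One should check along the way that the integral against $\widetilde{N}_0$ has the same jumps as the integral against $N_0$, which holds because the compensator is absolutely continuous in $t$.

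It remains to match this against (\ref{eq1}). On $\{Z_{s-}>0\}$ one has $Y_{s-} = X^1_{s-}$ and on $\{Z_{s-}\le 0\}$ one has $Y_{s-} = X^2_{s-}$, so for $i = 0,1$, $\mathbf{1}_{\{Z_{s-}>0\}}\big(g_i(X^1_{s-},u) - g_i(X^2_{s-},u)\big) = g_i(Y_{s-},u) - g_i(X^2_{s-},u)$, and likewise $\mathbf{1}_{\{Z_{s}>0\}}\big(b(X^1_s) - b(X^2_s)\big) = b(Y_s) - b(X^2_s)$ for a.e. $s$. Substituting the equations for $X^1$ and $X^2$ into $\int_{0+}^t \mathbf{1}_{\{Z_{s-}>0\}}\, dZ_s$ thus turns it into $Y_t - X^2_t$ written with integrands $g_i(Y_{s-},u) - g_i(X^2_{s-},u)$ and drift $b(Y_s) - b(X^2_s)$; adding the equation satisfied by $X^2$ and using $Y_0 = X^2_0$ yields (\ref{eq1}) for $Y$. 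I expect the main obstacle to be the rigorous verification that $L_t^0 = 0$, that is, passing from ``zero almost everywhere in $a$'' to ``zero at the single level $a = 0$,'' together with the careful bookkeeping of predictable integrands and of the jumps of the compensated integral; by contrast the sign-preservation coming from (1.a) is the conceptually decisive but technically short step.
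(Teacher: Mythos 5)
Your proof is correct and follows the paper's strategy in all essentials: kill $[Z,Z]^c$ because the equation is pure jump, use Corollary \ref{coro1} with $g\equiv 1$ to conclude the local time vanishes for Lebesgue-a.e.\ level, use Tanaka's formula to represent the positive part of the difference, annihilate both jump sums via the monotonicity condition (1.a) applied to $h(x)=x+g_i(x,u)$, and then substitute the two equations and merge the integrands using $\mathbf{1}_{\{Z_{s-}>0\}}\bigl(g_i(X^1_{s-},u)-g_i(X^2_{s-},u)\bigr)=g_i(Y_{s-},u)-g_i(X^2_{s-},u)$. (Writing $X^1\vee X^2=X^2+(X^1-X^2)^+$ rather than the paper's $X^1+(X^2-X^1)^+$ is immaterial.)

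The one place where you genuinely diverge is the elimination of the local time at the single level $a=0$, which you correctly flag as the delicate step. You apply Tanaka directly at $a=0$ and pass from ``$L^a_t=0$ for a.e.\ $a$'' to ``$L^0_t=0$'' via right-continuity of $a\mapsto L^a_t$; this works, but only if you invoke the nontrivial regularity theorem (in Protter, the same reference the paper uses for Tanaka's formula) guaranteeing a version of the semimartingale local time that is a.s.\ right-continuous in the space variable, and you should note that the exceptional Lebesgue-null set of levels depends on $\omega$, so the selection of good levels $a_n(\omega)\downarrow 0$ is pathwise --- harmless, since the right-continuity argument is pathwise too. The paper instead sidesteps all regularity of $a\mapsto L^a_t$: it takes expectations to get a deterministic null set $\mathcal{N}$, fixes a deterministic sequence $a_n\in\mathcal{N}^c$ decreasing to $0$ (using monotonicity of $L^a_t$ in $t$ to get $\mathbb{P}(L^{a_n}_t=0,\ \forall t\le T)=1$), applies Tanaka at each level $a_n$ where the local time term is absent, and then lets $n\to\infty$ in the formula itself. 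The trade-off: your route needs the space-regularity of local time but requires no limiting procedure in the stochastic integral, while the paper's route uses only elementary facts about local time but must justify the convergence $\int_{0+}^t\mathbf{1}_{\{X_{s-}>a_n\}}\,dX_s\to\int_{0+}^t\mathbf{1}_{\{X_{s-}>0\}}\,dX_s$ (dominated convergence for stochastic integrals). Both are legitimate; your version is arguably cleaner provided the citation to the right-continuous version of local time is made explicit. Your remaining bookkeeping --- that the compensated integral against $\widetilde{N}_0$ has the same jumps as the one against $N_0$ because the compensator is absolutely continuous in time, and that $X_{s-}=X_s$ for Lebesgue-a.e.\ $s$ in the drift term --- is exactly the care the argument requires.
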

\begin{proof}
We only need verify that $X^1_t \vee X^2_t$ satisfies (\ref{eq1}) for $t\le T$ for any fixed $T$. Denote $X=X^2-X^1$. It is clear that
 $[X, X]^c = 0.$ Applying Corollary {\ref{coro1}} to $g(a) =1$,  we have
$$ \int_{-\infty}^{\infty} L_T^a da = 0  \quad a.s.,$$
and hence,
\[\int_{-\infty}^{\infty} \mathbb{E} L_T^a da = 0.\]
Let $\mathcal{N} = \{a\in\RR : \mathbb{E} L_T^a \ne 0\}$. Then, $Leb(\mathcal{N}) = 0$. Since $L^a_t$ is non-decreasing in $t$, for any $a\in\cN^c$, we have
 \[\PP\(L^a_t=0,\;\;\forall t\le T\)=1.\]
Now we choose a sequence $\{a_n\}\subset\cN^c$ decreasing to 0.

Since $X=X^2-X^1$ is a semimartingle, by Tanaka's formula, we have
\begin{eqnarray}  \label{eq1234}
&& X_t^1  \vee (X_t^2 - a_n)    
= X_t^1 + (X_t - a_n)^{+}  \nonumber \\
&=&  X_t^1 +\int_{0+}^t \mathbf{1}_{(X_{s-} > a_n)} dX_s + \sum\limits_{0 < s \le t} \mathbf{1}_{(X_{s-} > a_n)} (X_s -a_n) ^{-}  \nonumber\\
&&+  \sum\limits_{0 < s \le t} \mathbf{1}_{(X_{s-} \le  a_n )} (X_s - a_n) ^{+},\qquad a.s.
\end{eqnarray}
Letting $n$ tends to infinity, we have
\begin{align}   \label{eq12345}
X_t^1  \vee X_t^2
=&  X_t^1 +\int_{0+}^t \mathbf{1}_{(X_{s-} >0)} dX_s + \sum\limits_{0 < s \le t} \mathbf{1}_{(X_{s-} >0 )} X_s^{-}  \nonumber\\
&+  \sum\limits_{0 < s \le t} \mathbf{1}_{(X_{s-} \le 0)} X_s ^{+} .
\end{align}
Denote by $D_0$ and $D_1$ all jumping times of $N_0$ and $N_1$, respectively. Then
\begin{eqnarray*}
&&X_s =  X_{s-}+\Delta X_s  \\
&=&     X_{s-}+\Big( g_0\left( X_{s-}^2, p_0(s)\right) - g_0\left (X_{s-}^1,  p_0(s)\right) \Big)\mathbf{1}_{D_0}(s) \\
&&+  \Big(g_1 \left(X_{s-}^2,  p_1(s)\right) - g_1 \left(X_{s-}^1, p_1(s)\right)\Big)\mathbf{1}_{D_1}(s) .
\end{eqnarray*}
Since $g_0(x,u)$ and $g_1(x,u)$ satisfy Condition (1.a), then
\begin{eqnarray}
\mathbf{1}_{(X_{s-}>0)} X_s^{-} 
&=& \(X_{s-}^2 - X_{s-}^1\)^-\mathbf{1}_{(X_{s-}^2 >X_{s-}^1)}\mathbf{1}_{(D_0\cup D_1)^c}(s)\nonumber\\
 &&+\(  X_{s-}^2 - X_{s-}^1+g_0\left( X_{s-}^2, p_0(s)\right) - g_0\left (X_{s-}^1,  p_0(s)\right)\)^-
\mathbf{1}_{(X_{s-}^2 >X_{s-}^1)}\mathbf{1}_{D_0}(s)\nonumber\\
&& +\(  X_{s-}^2 - X_{s-}^1+g_1 \left(X_{s-}^2,  p_1(s)\right) - g_1 \left(X_{s-}^1, p_1(s)\right)\)^-
\mathbf{1}_{(X_{s-}^2 >X_{s-}^1)}\mathbf{1}_{D_1}(s)\nonumber\\
&=& 0  ,   \nonumber
\end{eqnarray}
and hence,
\begin{equation}\label{eq0326a}
 \sum\limits_{0 < s \le t} \mathbf{1}_{(X_{s-}>0)} X_s^{-}  =0.
\end{equation}
Similarly, 
\begin{equation}\label{eq0326b}
\sum\limits_{0 < s \le t} \mathbf{1}_{(X_{s-} \le 0)} X_s^{+} = 0. \end{equation}
Plugging (\ref{eq0326a}) and (\ref{eq0326b}) into (\ref{eq12345}), we get
\begin{equation}\label{eq0326c}
X_t^1  \vee X_t^2
=  X_t^1 +\int_{0+}^t \mathbf{1}_{(X_{s-} >0)} dX_s.
\end{equation}

Replacing $X^{i}_s$ by
\begin{align*}
X_0^i + \int_0^t b(X^{i}_{s})ds + \int_0^t \int_{U_0} g_0(X^i_{s-}, u) \widetilde{N}_0(ds, du) + \int_0^t \int_{U_1} g_1(X^i_{s-}, u) N_1(ds, du), \quad i = 1,2,
\end{align*}
we can continue (\ref{eq0326c}) with
\begin{align}  \label{eq3}
X_t^1 \vee X_t^2
=& X_0^1 + \int_0^t b(X^1_{s})ds + \int_0^t \int_{U_0} g_0(X^1_{s-}, u) \widetilde{N}_0(ds, du) + \int_0^t \int_{U_1} g_1(X^1_{s-}, u) N_1(ds, du)  \nonumber  \\
&+ \int_0^t \mathbf{1}_{(X_{s}^2 >X_{s}^1)}(b(X^{2}_{s})- b(X^{1}_{s}))ds \nonumber  \\
&+ \int_{0+}^t \int_{U_0} \mathbf{1}_{(X_{s-}^2 >X_{s-}^1)} (g_0(X^2_{s-}, u)  - g_0(X^1_{s-}, u) ) \widetilde{N}_0(ds, du)    \nonumber  \\
&+ \int_{0+}^t \int_{U_1} \mathbf{1}_{(X_{s-}^2 >X_{s-}^1)} (g_1(X^2_{s-}, u)  - g_1(X^1_{s-}, u) ) N_1(ds, du).
\end{align}
Simplifying (\ref{eq3}), we have
\begin{align*}
X_t^1 \vee X_t^2
=& X_0^1 \vee X_0^2 + \int_0^t \(\mathbf{1}_{(X_{s}^2 \le X_{s}^1)}b(X^1_{s}) +\mathbf{1}_{(X_{s}^2 >X_{s}^1)}b(X^2_{s})\)ds \\
& + \int_0^t \int_{U_0} \(\mathbf{1}_{(X_{s-}^2 \le X_{s-}^1)}g_0(X^1_{s-},u) +\mathbf{1}_{(X_{s-}^2 >X_{s-}^1)}g_0(X^2_{s-},u)\)  \widetilde{N}_0(ds, du) \\
&+\int_0^t \int_{U_1} \(\mathbf{1}_{(X_{s-}^2 \le X_{s-}^1)}g_1(X^1_{s-},u) +\mathbf{1}_{(X_{s-}^2 >X_{s-}^1)}g_1(X^2_{s-},u)\)  N_1(ds, du) \\
=& X_0^1 \vee X_0^2 + \int_0^t b(X^1_{s} \vee X^2_{s})ds + \int_{0+}^t \int_{U_0}  g_0(X^1_{s-} \vee X^2_{s-}, u)  \widetilde{N}_0(ds, du) \\
&+ \int_{0+}^t \int_{U_1} g_1(X^1_{s-} \vee X^2_{s-}, u)   N_1(ds, du),
\end{align*}
which implies that $X_t^1 \vee X_t^2$ is also a solution of (\ref{eq1}).
\end{proof}

\begin{theorem}
Suppose that the equation (\ref{eq1}) has a weak solution. Then the weak uniqueness for the equation (\ref{eq1}) implies the pathwise uniqueness.
\end{theorem}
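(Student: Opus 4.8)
The plan is to combine Proposition \ref{prop1} with the weak uniqueness through a short distributional comparison, with no integrability needed. First I would fix the pathwise-uniqueness setup: let $X^1$ and $X^2$ be two solutions of (\ref{eq1}) on the same stochastic basis, driven by the same Poisson random measures $N_0$ and $N_1$, with $X_0^1=X_0^2$ a.s. The goal is to show that $X^1$ and $X^2$ are indistinguishable.

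By Proposition \ref{prop1}, $X^1\vee X^2$ is again a solution of (\ref{eq1}). Since $X_0^1=X_0^2$ a.s., all three processes $X^1$, $X^2$ and $X^1\vee X^2$ start from the same initial value, so weak uniqueness forces them to share the same law. In particular, for each fixed $t$ the one-dimensional marginals coincide; for instance $X_t^1\vee X_t^2 \stackrel{d}{=} X_t^1$ and $X_t^1\vee X_t^2 \stackrel{d}{=} X_t^2$.

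The key step is then an elementary comparison: if $Y\ge Z$ pointwise and $Y\stackrel{d}{=}Z$, then $Y=Z$ a.s. Indeed, $\{Z>a\}\subset\{Y>a\}$ gives $\PP(Y>a)\ge\PP(Z>a)$, and equality of laws turns this into an equality for every real $a$; hence $\PP(Y>a,\,Z\le a)=\PP(Y>a)-\PP(Z>a)=0$. Writing $\{Y>Z\}=\bigcup_{a\in\QQ}\{Y>a\ge Z\}$ as a countable union then yields $\PP(Y>Z)=0$, i.e. $Y=Z$ a.s. Applying this with $Y=X_t^1\vee X_t^2$ and $Z=X_t^1$ gives $X_t^1\vee X_t^2=X_t^1$ a.s., and with $Z=X_t^2$ gives $X_t^1\vee X_t^2=X_t^2$ a.s. Therefore $X_t^1=X_t^2$ a.s. for each fixed $t$, and right-continuity of the sample paths upgrades this to indistinguishability by running the argument over a countable dense set of times.

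I expect the only delicate point to be verifying that weak uniqueness genuinely applies to $X^1\vee X^2$: one must confirm that weak uniqueness is phrased in terms of the initial law alone, so that $X^1\vee X^2$, which starts at the common value $X_0^1$, lies in the same uniqueness class as $X^1$ and $X^2$. Everything else is routine, and I would emphasize that the comparison lemma works with distribution functions rather than expectations, so the conclusion requires no moment assumption on the solutions.
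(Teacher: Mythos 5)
Your proposal is correct and structurally identical to the paper's proof: invoke Proposition \ref{prop1} to see that $X^1\vee X^2$ solves (\ref{eq1}) from the same initial value, use weak uniqueness to identify the one-dimensional marginal laws of $X^1\vee X^2$ with those of $X^1$ (resp.\ $X^2$), and conclude that an a.s.-ordered pair of random variables with the same law must be a.s.\ equal. Where you diverge is the final comparison step, and your version is actually sharper. The paper concludes via $\EE\bigl(X_t^1\vee X_t^2 - X_t^1\bigr)=0$ together with nonnegativity of the difference; but deducing that this expectation vanishes from equality of laws tacitly requires integrability of the solutions (if $\EE X_t^1=\EE\bigl(X_t^1\vee X_t^2\bigr)=+\infty$, one cannot subtract the expectations), and no moment hypothesis appears anywhere in the paper. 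Your tail-probability lemma --- $Y\ge Z$ and $\PP(Y>a)=\PP(Z>a)$ for every $a$ give $\PP(Y>a\ge Z)=0$, then a countable union over rational $a$ yields $\PP(Y>Z)=0$ --- removes this implicit integrability assumption entirely, so it quietly closes a small gap in the published argument rather than merely reproducing it. You are also more careful than the paper on two minor points: you flag that weak uniqueness must be phrased in terms of the initial law alone (satisfied here since $X_0^1\vee X_0^2=X_0^1$ a.s.), and you upgrade the fixed-$t$ equality to indistinguishability via right-continuity of paths over a countable dense set of times, a step the paper leaves implicit.
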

\begin{proof}
Suppose that $X_1$ and $X_2$ are two solutions of the SDE (\ref{eq1}). From Proposition \ref{prop1} we know that $X^1 \vee X^2$ is also a solution of (\ref{eq1}).

Since the weak uniqueness holds for the equation (\ref{eq1}), we have \[ \mathcal{L}(X_t^1) = \mathcal{L}(X_t^2) = \mathcal{L}(X_t^1 \vee X_t^2), \]
where $\mathcal{L}(\xi)$ denotes the law of the random variable $\xi$. Then, $\mathbb{E} (X_t^1 \vee X_t^2 - X_t^1) = 0$. Since  $X_t^1 \vee X_t^2 - X_t^1$ is a non-negative random variable, we get $X_t^1 \vee X_t^2 = X_t^1$ a.s. Similarly, we have $X_t^1 \vee X_t^2 = X_t^2$  a.s., which implies $X_t^1 = X_t^2$  a.s.
\end{proof}

\section{Application}
In \cite{EK} Etheridge and Kurtz represented the $\Lambda$-Fleming-Viot process as the limit of an infinite system of particles governed by SDE (4.25) (in their paper) driven by Poisson random measures. The goal of this section is to establish the pathwise uniqueness of the solution to this SDE by making use of our results proved in last section.

For the convenience of the reader, we now introduce Etheridge-Kurtz equation (namely, SDE (3.7) below) briefly. Let $D_{y,w} \subseteq \mathbb{R}^d$ be the ball centered at $y$ with radius $w$. Let $E = \{0,1\} \times D_{0,1} \times \mathbb{R}^d \times [0,1] \times [0, \infty)$, and let $\xi$ be a Poisson random measure on $[0, \infty) \times E$ with mean measure
\[
ds((1 - \zeta)\delta_0(\theta) + \zeta \delta_1(\theta)) v_{0,1}(dv)dy \nu^1(w, d\zeta)\nu^2(dw),
\]
where $v_{0,1}$ is the uniform distribution on the ball $D_{0,1}$, $\nu^2$ is a measure on $[0, \infty)$ and $\nu^1$ is a transition measure from $[0, \infty)$ to $[0, 1]$ satisfying the following conditions
\begin{align} \label{eqc1}
\int_{[0,1]\times(1,\infty)} \zeta w^d \nu^1(w, d\zeta)\nu^2(dw) <\infty
\end{align}
and
\begin{eqnarray}\label{eqc2}
\begin{cases}
\int_{[0,1]\times [0,1]} \zeta |w|^2 \nu^1(w, d\zeta)\nu^2(dw) <\infty, &if \quad d = 1,
\cr \int_{[0,1]\times [0,1]} \zeta |w|^{2+d} \nu^1(w, d\zeta)\nu^2(dw) <\infty, &if \quad d \ge 2.
\end{cases}
\end{eqnarray}
The above conditions (\ref{eqc1}) and (\ref{eqc2}) imply the existence of the stochastic integrals in the limiting equation (\ref{eqA2}) below, while \cite{VW} assumes
\begin{align} \label{eqc3}
\int_{[0,1]\times(0,\infty)} \zeta w^d \nu^1(w, d\zeta)\nu^2(dw) <\infty.
\end{align}
Define
\[
\Gamma_k = D_{0,k} \times [0,1] \times [2^{-k}, 2^{k}].
\]
Denote $z = (\theta, v, y, \zeta, w)$ and $\xi(ds, dz) = \xi(ds, d\theta, dv, dy, d\zeta, dw) $. The following is the SDE we will study in this section:
\begin{align} \label{eqA2}
X(t) & =  X(0) + \lim_{k \to \infty} \int_{[0,t]\times \{0,1\}\times D_{0,1}\times \Gamma_k} \mathbf{1}_{D_{y,w}} (X(s-))\theta \left( y + wv- X(s-) \right) \xi (ds, dz) \nonumber  \\
& = X(0) + \int_{[0,t] \times E}  \mathbf{1}_{D_{y,w}} (X(s-))\theta \left( y + wv- X(s-) \right) \tilde{\xi} (ds, dz),
\end{align}
where $\tilde{\xi}$ is $\xi$ centered by its mean measure.

The requirement in (\ref{eqc3}) implies that a point in space is involved in a birth/death event only finitely often in a finite time interval while the weaker conditions (\ref{eqc1}) and (\ref{eqc2}) allows infinitely many very small birth/death events. If (\ref{eqc3}) holds, strong uniqueness has been proved in Lemma 4.3 of \cite{EK}. Now we are set about to prove the strong uniqueness under conditions (\ref{eqc1})  and (\ref{eqc2}) for the case $d=1$.

\begin{theorem}
The SDE (\ref{eqA2}) has a unique strong solution.
\end{theorem}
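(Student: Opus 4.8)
The strategy is to recognize the Etheridge--Kurtz equation (\ref{eqA2}) as an instance of the general equation (\ref{eq1}) and then apply the theorem of Section 2. Weak existence (obtained by approximation in \cite{EK}) and weak uniqueness for the single equation (as discussed in \cite{EK,VW}) are taken as known inputs, so the only genuine tasks are to cast (\ref{eqA2}) in the form (\ref{eq1}) and to verify Condition (1.a). Pathwise uniqueness then follows from Section 2, and the existence of a \emph{unique strong} solution follows by combining this with weak existence via Yamada--Watanabe \cite{YW}.

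First I would split the Poisson noise according to the radius $w$. Writing the integrand as $f(x,z)=\mathbf{1}_{D_{y,w}}(x)\,\theta\,(y+wv-x)$, I would take $U_0$ to be the part of $E$ with $w\le 1$, driving a compensated integral against $\widetilde N_0:=\widetilde\xi|_{U_0}$, and $U_1$ to be the part with $w>1$, driving an uncompensated integral against $N_1:=\xi|_{U_1}$. Over $U_1$, condition (\ref{eqc1}) bounds the rate $\int \mathbf{1}_{D_{y,w}}(x)\,\zeta\,dy\,\nu^1\nu^2 \asymp \int \zeta w^d \nu^1\nu^2$ at which events cover a given point, so large-$w$ events have finite activity and $\int f\,N_1$ is a pathwise finite sum of jumps. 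Over $U_0$, condition (\ref{eqc2}) with $d=1$ controls the second moment $\int \zeta w^{2+d}\nu^1\nu^2$ of the small jumps (here $w\le1$ makes $w^{2+d}=w^{3}$ dominated by $w^{2}$), so the compensated integral $\int f\,\widetilde N_0$ is a well-defined $L^2$ martingale. Because $v_{0,1}$ is symmetric about the origin and $D_{y,w}$ is a centered ball, the mean jump (the $z$-integral of $f(x,z)$ against the mean measure) vanishes by symmetry; hence no drift arises, one may take $b\equiv 0$, and the compensated and uncompensated forms agree on $U_1$. With $g_0=f|_{U_0}$ and $g_1=f|_{U_1}$, equation (\ref{eqA2}) is exactly (\ref{eq1}).

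The crux is Condition (1.a): that $x\mapsto x+g_i(x,z)$ is non-decreasing. For $\theta=0$ the integrand vanishes and $x+f(x,z)=x$. For $\theta=1$ and $d=1$ the ball $D_{y,w}$ is the interval $(y-w,y+w)$ and
\[
x+f(x,z)=\begin{cases} x, & x\notin(y-w,\,y+w),\\ y+wv, & x\in(y-w,\,y+w),\end{cases}
\]
with $y+wv\in(y-w,y+w)$ since $v\in(-1,1)$. Thus $x+f$ increases with unit slope off the interval, is constant on it, and jumps \emph{upward} at both endpoints, so it is non-decreasing; moreover $\mathbf{1}_{D_{y,w}}$ introduces only the two endpoints as discontinuities, meeting the countable-discontinuity requirement. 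This is precisely where $d=1$ is indispensable: for $d\ge2$ there is no total order on $\mathbb{R}^d$, the $X^1\vee X^2$ construction of Proposition \ref{prop1} has no analogue, and the monotonicity of $x+f$ is meaningless.

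Having placed (\ref{eqA2}) in the scope of Section 2 and verified (1.a), I would invoke that theorem to conclude pathwise uniqueness, and then Yamada--Watanabe to upgrade weak existence to a unique strong solution. The main obstacle I anticipate is not the monotonicity check, which is short and is the conceptual heart of the matter, but the bookkeeping in the splitting step: confirming that under (\ref{eqc1}) and (\ref{eqc2}) the two integrals are well defined and that the vanishing of the mean jump genuinely removes any drift, so that the rewritten equation falls squarely under the hypotheses of Section 2.
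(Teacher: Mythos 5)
Your proposal is correct and follows essentially the same route as the paper: take weak existence and weak uniqueness from \cite{EK} as given, verify Condition (1.a) for $g_0(x,z)=\mathbf{1}_{D_{y,w}}(x)\theta(y+wv-x)$ in the case $d=1$ (your endpoint analysis matches the paper's check of $(1-\theta)(y\pm w)+\theta(y\pm wv)$), and invoke Proposition \ref{prop1} and the theorem of Section 2. Your additional splitting of the noise into small-$w$ compensated and large-$w$ uncompensated parts, with the symmetry argument that no drift arises, is careful but unnecessary bookkeeping: the paper simply treats the whole integrand as $g_0$ against the compensated measure $\tilde{\xi}$ (with $b\equiv 0$ and $g_1\equiv 0$), since conditions (\ref{eqc1}) and (\ref{eqc2}) already guarantee the stochastic integral in (\ref{eqA2}) is well defined.
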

\begin{proof}
As they pointed out in Lemma 4.3 of \cite{EK}, weak existence for SDE (\ref{eqA2}) follows by approximation and the weak uniqueness follows by uniqueness of the corresponding martingale problem ($X$ is a {\levy} process). To prove the pathwise uniqueness, we only need to verify the condition $(1.a)$ for $g_0$ defined by $$g_0(x,z) = \mathbf{1}_{D_{y,w}} (x)\theta \left( y + wv- x \right).$$ Now, we consider the monotonicity of $x + g_0(x,z)$ in $x$.

When $x \in D_{y,w}$, $x + g_0(x,z) = x + \theta(y + wv - x) = (1 - \theta)x + \theta(y + wv)$, which is non-decreasing in $x$.

When $x \notin D_{y,w}$, $x + g_0(x,z) = x$. Then $\forall x \in [y+w, \infty)$, $x + g_0(x,z)$ is non-decreasing in $(y-w, \infty)$ if and only if
\[(1 - \theta)(y+w) + \theta(y+wv) \le y+w .\]
i.e. $$(1- \theta)w + \theta wv \le w \Leftrightarrow (v-1)\theta \le 0, $$
$(v-1)\theta \le 0$ does hold since $v \in [-1,1], \theta \in \{0,1\}.$

Similarly, it is easily to check
$$(1 - \theta)(y-w) + \theta(y-wv) \ge  y-w , \quad \forall x \in (-\infty, y-w) $$
which implies $x+g_0(x,z)$ is non-decreasing in $(-\infty ,y+w)$. Consequently, $x+g_0(x,z)$ is non-decreasing in $x$.

By Proposition 2.1 and Theorem 2.2, we conclude that (\ref{eqA2}) satisfies strong uniqueness under conditions (\ref{eqc1}) and the case of $d=1$ in  (\ref{eqc2}). Hence, the strong uniqueness holds for the system.
\end{proof}

{\bf Acknowledgment} We would like to thank Tom Kurtz who posed this problem to us and offered many suggestions.

\end{document}